\newtheorem{theorem}{Theorem}[section]
\newtheorem{lem}[theorem]{Lemma}
\theoremstyle{definition}
\newtheorem{definition}[theorem]{Definition}
\theoremstyle{remark}
\numberwithin{equation}{section}
\newcommand{\blankbox}[2]{%
\parbox{\columnwidth}{\centering
}%
}
\title{Semigroups of matrices with dense orbits }
\author{ Mohammad Javaheri \\
Department of Mathematics\\
 Trinity College \\ Hartford, CT 06106
\\ \small{Mohammad.Javaheri@trincoll.edu}  
}
\begin{document}

\maketitle

\begin{abstract}
We give examples of $n \times n$ matrices $A$ and $B$ over the filed $\mathbb{K}=\mathbb{R}$ or $\mathbb{C}$ such that for almost every column vector $x \in \mathbb{K}^n$, the orbit of $x$ under the action of the semigroup generated by $A$ and $B$ is dense in $\mathbb{K}^n$.  \end{abstract}

\section{Main statements}
Let $X$ be a topological vector space and $T: X \rightarrow X$ be a continuous linear operator on $X$. Then $T$ is called \emph{hypercyclic} if there exists a vector $x \in X$ whose orbit $\{x,Tx,T^2x,\ldots \}$ is dense in $X$. 

In \cite{A}, Ansari proved that all infinite-dimensional separable Banach spaces admit hypercyclic operators. On the other hand, Rolewicz \cite{Rol} showed that no finite-dimensional Banach space admits a hypercyclic operator. This can be seen by looking at the Jordan normal form of the matrix of the operator; the details of this argument can be found in \cite{K}. Hence, in the finite-dimensional case, one is motivated to consider a finitely-generated semigroup of operators instead of a single operator, and the following definition is the natural extension of hypercyclicity to semigroups of operators. 

\begin{definition}
Let $\Gamma=\langle T_1,T_2,\ldots, T_k \rangle$ be a semigroup generated by continuous operators $T_1,T_2,\ldots, T_k$ on a finite-dimensional vector space $X$ over $\mathbb{K}=\mathbb{R}$ or $\mathbb{C}$. We say $\Gamma$ is hypercyclic if there exists $x \in \mathbb{K}^n$ so that $\{Tx: T \in \Gamma\}$ is dense in $\mathbb{K}^n$.

\end{definition}

In \cite{F}, Feldman initiated the study of hypercyclic semigroups of linear operators in the finite-dimensional case and proved that, in dimension $n$, there exists a hypercyclic semigroup generated by $n+1$ diagonalizable matrices (Costakis et al. \cite{CHM} proved that it is not possible to reduce the number of generators to less than $n+1$). If one removes the diagonalizability condition, it is shown by Costakis et al. \cite{CHM2} that one can find a hypercyclic abelian semigroup of $n$ matrices in dimension $n$. It is then natural to consider the non-commuting case. \emph{What is the minimum number of linear maps on $\mathbb{K}^n$ that generate a hypercyclic semigroup?} In Theorem \ref{main}, we show that the answer is 2 for all $n\geq 1$.

In the sequel, for a matrix $A$, let $A_{ij}$ be the entry on the $i$'th row and the $j$'th column of $A$. The diagonal entries $A_{ii}$ are denoted by $A_i$ for short. Also let $I$ be the identity matrix and $\Delta$ be the $n\times n$ matrix with $\Delta_{11}=1$, $\Delta_{ij}=0$ for $(i,j) \neq (1,1)$. To state the Theorem \ref{main}, we need the following definition. 

\begin{definition}A pair $(a,b) \in \mathbb{K}^2$ is called \emph{generating}, if $|a|<|b|$ and $\{a^mb^n: m,n \in \mathbb{N}\}$ is dense in $\mathbb{K}$. We set 
$$(a,b) \prec (c,d)~,$$ 
if and only if $(a,b)$ and $(c,d)$ are both generating pairs and 
\begin{equation}\label{prec}
\frac{\ln |a|}{\ln |b|} < \frac{\ln |c|}{ \ln |d|}~.
\end{equation}
\end{definition}

The following theorem is the main result of this paper.

\begin{theorem}\label{main}
Let $A$ and $B$ be $n\times n$ matrices over $\mathbb{K}$ so that $A$ is lower triangular and $B$ is diagonal. Suppose the following properties hold.
\begin{itemize}
\item[i)] The diagonal entries of $A$ and $B$ satisfy 
\begin{eqnarray}\label{firstineq}
 0<|B_n|<\ldots<|B_2|<|B_1|<1<|A_1|<|A_2|<\ldots <|A_n|~,&&\\ \label{secondineq}
\left ( \frac{B_n}{B_1}, \frac{A_n}{A_1} \right ) \prec \ldots \prec \left ( \frac{B_2}{B_1}, \frac{A_2}{A_1} \right ) \prec (B_1,A_1)~.&&
\end{eqnarray}

\item [ii)] The entries on the first column of $(A_1^{-1}A-I+\Delta)^{-1}$ are all non-zero.

\end{itemize}
Then the orbit of every column vector $p=(p_1,\ldots, p_n)^T \in \mathbb{K}^n$ with $p_1\neq 0$ under the action of the semigroup generated by $A$ and $B$ is dense in $\mathbb{K}^n$. In fact, the set 
\begin{equation}\{B^{k_1}A^{l_1}\ldots B^{k_n}A^{l_n}p:~\forall i~k_i,l_i\geq 0\}~\end{equation}
is dense in $\mathbb{K}^n$. 
\end{theorem}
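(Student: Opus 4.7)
The plan is to construct, for any target $q\in\mathbb{K}^n$ and $\varepsilon>0$, non-negative integer exponents so that $\lVert Mp-q\rVert<\varepsilon$, where $M=B^{k_1}A^{l_1}\cdots B^{k_n}A^{l_n}$. Since $A$ is lower triangular and $B$ diagonal, $M$ is lower triangular with diagonal entries $D_j:=\prod_{s=1}^n B_j^{k_s}A_j^{l_s}$. In particular, the first coordinate of $Mp$ equals $D_1 p_1$, so by the generating-pair property of $(B_1,A_1)$ (already at the level of a single factor), $D_1p_1$ can be made to approximate $q_1$. I would also reinterpret condition (ii) at the start: solving $(A_1^{-1}A-I+\Delta)u=e_1$ forces $u_1=1$ and then the same recursion as the eigenvector equation $(A-A_1I)v=0$, so the first column of $(A_1^{-1}A-I+\Delta)^{-1}$ is exactly the eigenvector $w^{(1)}$ of $A$ for eigenvalue $A_1$ normalized by $w^{(1)}_1=1$. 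Thus (ii) is the statement that every entry of $w^{(1)}$ is nonzero.

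For the higher coordinates I would expand $Mp$ in the $A$-eigenbasis $\{w^{(i)}\}$, writing $p=\sum_i c_i w^{(i)}$ with $c_1=p_1\neq 0$. A direct computation shows the $j$-th coordinate of $Mp$ separates into a diagonal piece $D_j p_j$ plus cross terms, whose leading contribution in the large-exponent regime is proportional to $D_1 p_1 w^{(1)}_j$; this is where condition (ii) enters, guaranteeing that the $p_1$-cross-term is a genuine non-zero contribution in \emph{every} coordinate. Now splitting
\[
B_j^{k_s}A_j^{l_s}=\bigl((B_j/B_1)^{k_s}(A_j/A_1)^{l_s}\bigr)\,B_1^{k_s}A_1^{l_s},
\]
the relative factor is governed by the generating pair $(B_j/B_1,A_j/A_1)$, and the chain $(B_n/B_1,A_n/A_1)\prec\cdots\prec(B_2/B_1,A_2/A_1)\prec(B_1,A_1)$ encodes that the log-slopes of successive pairs are genuinely distinct. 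Exploiting this, I would use the $j$-th factor $B^{k_j}A^{l_j}$ as the ``active adjuster'' for coordinate~$j$: force the relative factor $(B_j/B_1)^{k_j}(A_j/A_1)^{l_j}$ to take a prescribed value in $\mathbb{K}$ while simultaneously keeping $B_1^{k_j}A_1^{l_j}$ close to a prescribed (small) value, so that $D_1$ and the previously-set $D_i$ ($i<j$) are not disturbed. Iterating $j=2,3,\ldots,n$ then successively pins down every coordinate of $Mp$.

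The main obstacle will be the simultaneous integer-approximation step used at every level of the induction: to produce honest non-negative integers $(k_j,l_j)$ that approximate \emph{both} $B_1^{k_j}A_1^{l_j}$ and $(B_j/B_1)^{k_j}(A_j/A_1)^{l_j}$ to prescribed values. The generating-pair hypothesis gives each approximation individually, and the strict $\prec$-inequality says the two pairs have independent log-slopes so that the joint image is not confined to a sublocus; but promoting this to a quantitative simultaneous statement, and then chaining the $n$ such approximations without error accumulation through the off-diagonal cross terms, is where the technical weight of the proof lies. This is the step where I expect both the strictness of the $\prec$-chain and the non-vanishing of $w^{(1)}_j$ to be used in an essential, coordinated way.
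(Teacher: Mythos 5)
Your reinterpretation of condition (ii) is correct and matches what the paper proves in its Lemma on $(A_1A^{-1})^l$: the first column of $(A_1^{-1}A-I+\Delta)^{-1}$ is the $A_1$-eigenvector of $A$ normalized to have first entry $1$, and $\lim_l (A_1A^{-1})^l$ is the rank-one limit whose first column is that eigenvector; so (ii) is exactly the nonvanishing of its entries, and that is indeed where it enters. But the core of your plan has a genuine gap, and it is precisely the step you flag as "the main obstacle": the simultaneous control, with a \emph{single} pair of exponents $(k_j,l_j)$, of both the relative factor $(B_j/B_1)^{k_j}(A_j/A_1)^{l_j}$ (to a prescribed nonzero value) and the factor $B_1^{k_j}A_1^{l_j}$ (kept near a prescribed small value so as not to disturb the already-set coordinates). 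This is not a technical refinement of the generating-pair hypothesis; it is false. In log-modulus coordinates the map $(k,l)\mapsto\bigl(k\ln|B_1|+l\ln|A_1|,\;k\ln|B_j/B_1|+l\ln|A_j/A_1|\bigr)$ is an invertible linear map (the two slopes differ by the strict $\prec$-chain), so it sends $\mathbb{Z}^2$ to a discrete lattice in $\mathbb{R}^2$: the achievable pairs of moduli are nowhere dense, and you cannot prescribe both. Worse, the $\prec$-ordering works against you: if $(B_j/B_1)^{k}(A_j/A_1)^{l}$ stays bounded and bounded away from $0$, then $l\approx -k\,\ln|B_j/B_1|/\ln|A_j/A_1|$, and since $\ln|B_j/B_1|/\ln|A_j/A_1|<\ln|B_1|/\ln|A_1|$ one gets $k\ln|B_1|+l\ln|A_1|\to+\infty$, i.e.\ $|B_1^{k}A_1^{l}|\to\infty$. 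So the "adjuster" block for coordinate $j$ necessarily blows up, rather than preserves, the previously pinned coordinates.

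The missing idea, which is how the paper's induction actually closes, is \emph{pre-compensation} rather than non-disturbance: one proves by induction that the whole slice $\mathbb{K}^s\times\{0\}^{n-s}$ lies in the closure of the orbit, and at stage $s+1$, after choosing the exponents $(k,l)$ of the new (outermost) block $B^kA^l$ so that the single ratio $(B_{s+1}/B_1)^k(A_{s+1}/A_1)^l$ approximates $-\alpha/\omega$ (only one generating-pair approximation per stage, never a simultaneous one), one does not try to keep the earlier coordinates fixed; instead one re-chooses the starting approximant in the slice to be $S^{-l}U^{-k}(y_1,\ldots,y_s)$ (with $S,U$ the upper-left $s\times s$ blocks of $A,B$), which is available precisely because the inductive hypothesis gives density of the entire slice, not just of one point. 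All other entries of the correction matrix then tend to $0$ by the growth lemma for triangular powers together with the two-pair lemma (bounded $c^md^n$ forces $a^mb^n\to0$ when the log-slopes are strictly ordered), using that the chain \eqref{secondineq} makes the $(s+1,1)$ entry the unique "critical" one. Without this compensation device your chaining of $n$ adjusters cannot be repaired by sharpening the approximation estimates, because the obstruction is structural (a lattice, not an error term).
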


To demonstrate that the set of pairs of matrices $(A,B)$ satisfying conditions (i)-(ii) of Theorem \ref{main} is nonempty, we give an explicit example of such a pair in both real and complex cases. In both real and complex cases, we let $A$ be the matrix with $A_k=3^k$ and $A_{k1}=3$ for $1\leq k\leq n$, and $A_{kl}=0$ when $k\neq l$ and $l\neq 1$. In the real case, let $B$ be the diagonal matrix with $B_1=-2^{-1}$ and $B_k=2^{-k^2}$ for $k>1$. In the complex case, let $B$ the the diagonal matrix with $B_k=(2^{-1}e^{i})^{k^2}$ for $k\geq 1$. Here $e^i=\cos (1) +i \sin (1)$. It is straightforward to check that conditions (i)-(ii) of Theorem \ref{main} are satisfied.

Theorem \ref{main} is related to a recent result of Costakis et al. \cite{CHM} which states that in any finite dimension there are pairs of commuting matrices which form a locally hypercyclic, non-hypercyclic tuple; in other words, they prove that there exist linear maps $A$ and $B$ on $\mathbb{K}^n$ and $x\in \mathbb{K}^n$ so that for every $y \in \mathbb{K}^n$ there exist sequences $x_i \rightarrow x$ and $y_i \rightarrow y$, where $y_i=A^{u_i}B^{v_i}x_i$ and $u_i+v_i \rightarrow \infty$. 

It is worth mentioning that condition (ii) of Theorem \ref{main} is a generic condition in the sense that it is satisfied by an open and dense subset of matrices. In particular, condition (ii) is satisfied when all of the entries of $A$ on the main diagonal and the first column are non-zero while all of its other entries are zero.

In the next theorem, we consider semigroups of affine maps on $\mathbb{R}^n$. An affine map is a linear map followed by a translation. We show that there exist affine maps $x \rightarrow Bx$ and $x \rightarrow Ax+v$ so that every orbit is dense. In dimension one, the semigroup of affine maps generated by 
$$f(x)=ax~,~g(x)=bx+c~,$$
has dense orbits in $\mathbb{R}$, where $ab<0$, $|a|>1\geq |b|>0$, and $c\neq 0$; c.f. \cite{J3}. Hence, the following theorem can be thought of as a generalization to higher dimensions.

\begin{theorem}\label{second}
Suppose that $A$ and $B$ are $n \times n$ matrices over $\mathbb{K}$ and their diagonal entries satisfy the inequalities \eqref{firstineq}. Moreover, suppose that

\begin{equation}\label{propiii}
(B_n,A_n) \prec \ldots \prec (B_2,A_2) \prec (B_1,A_1)~.
\end{equation}
If all of the entries of the column vector $(A-I)^{-1}v$ are non-zero, then every orbit of the semigroup action generated by
\begin{equation}x \rightarrow Ax+v~,~x \rightarrow Bx~,\end{equation}
is dense in $\mathbb{R}^n$. 

\end{theorem}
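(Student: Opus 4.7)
The plan is to deduce Theorem~\ref{second} from the proof technique of Theorem~\ref{main} via the standard affinization that linearizes affine maps by embedding into one higher dimension. Set $\tilde p = (1,p^T)^T \in \mathbb{R}^{n+1}$ and define
\[
\tilde A = \begin{pmatrix} 1 & 0 \\ v & A \end{pmatrix}, \qquad \tilde B = \begin{pmatrix} 1 & 0 \\ 0 & B \end{pmatrix}.
\]
Then $\tilde A\tilde p = (1,(Ap+v)^T)^T$ and $\tilde B\tilde p = (1,(Bp)^T)^T$, so the orbit of $\tilde p$ under $\langle\tilde A,\tilde B\rangle$ is in bijection, via $(1,x^T)^T\mapsto x$, with the orbit of $p$ under the semigroup generated by $x\mapsto Ax+v$ and $x\mapsto Bx$. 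Density in $\mathbb{R}^n$ is therefore equivalent to density of the orbit of $\tilde p$ in the affine hyperplane $\{x_0=1\}\subset\mathbb{R}^{n+1}$.

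Next I would verify that the hypotheses of Theorem~\ref{second} translate into the key data used in the proof of Theorem~\ref{main} applied to $(\tilde A,\tilde B)$. The lower $n\times n$ blocks of $\tilde A,\tilde B$ are $A,B$, whose diagonals already satisfy \eqref{firstineq}; only the top-left entry, $1$, fails the strict inequalities. Because $\tilde B_1 = \tilde A_1 = 1$, the ratios appearing in \eqref{secondineq} simplify to $(\tilde B_{k+1}/\tilde B_1,\tilde A_{k+1}/\tilde A_1) = (B_k,A_k)$, so \eqref{secondineq} for $(\tilde A,\tilde B)$ becomes exactly $(B_n,A_n)\prec\ldots\prec(B_1,A_1)$, which is \eqref{propiii}. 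A direct block inversion yields
\[
(\tilde A-I+\tilde\Delta)^{-1} = \begin{pmatrix} 1 & 0 \\ -(A-I)^{-1}v & (A-I)^{-1} \end{pmatrix},
\]
so the non-vanishing of its first column is precisely the hypothesis that every entry of $(A-I)^{-1}v$ is non-zero. Thus the affine hypotheses of Theorem~\ref{second} match, term for term, the linear hypotheses used inside the proof of Theorem~\ref{main}.

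The main obstacle is that the degeneracy $\tilde B_1 = \tilde A_1 = 1$ means Theorem~\ref{main} cannot be applied verbatim: the pair $(\tilde B_1,\tilde A_1)=(1,1)$ is not generating, and the strict inequalities $|\tilde B_1|<1<|\tilde A_1|$ fail. The resolution is to \emph{rerun} the proof of Theorem~\ref{main} rather than invoke the statement, and to observe that the degeneracy is harmless. Every element of $\langle\tilde A,\tilde B\rangle$ has top row $(1,0,\ldots,0)$, so the first coordinate of the orbit is pinned to $1$ and does not need to be approximated. The inductive step in Theorem~\ref{main} that uses the outermost generating pair to tune the first coordinate is therefore vacuous here, and the remaining $n$ inductive steps use precisely the genuinely generating pairs $(B_1,A_1),\ldots,(B_n,A_n)$ from \eqref{propiii} and the non-vanishing translated condition above --- exactly the data Theorem~\ref{main}'s induction needs for the remaining coordinates.

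With this reindexing in place, the finale mirrors Theorem~\ref{main}: given a target $z\in\mathbb{R}^n$, one chooses exponents $(k_k,l_k)$ inductively so that $B^{k_k}A^{l_k}$ approximates the required scaling on the $k$th coordinate (using the generating property of $(B_k,A_k)$), and the strict $\prec$ comparisons in \eqref{propiii} guarantee that the perturbation this induces on the previously-tuned coordinates tends to $0$. Translating back to the original setting produces a sequence of words $T_B^{k_1}T_A^{l_1}\cdots T_B^{k_n}T_A^{l_n}p$ converging to $z$, establishing density. The only real work beyond Theorem~\ref{main} is the bookkeeping needed to see that the translation vectors appearing at each application of $T_A$ assemble into the data captured by $(A-I)^{-1}v$; this is the main obstacle, but it is settled once the block-inversion identity above is in hand.
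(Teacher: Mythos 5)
Your embedding is the right idea, and your route is viable, but it completes the reduction differently from the paper. You take the unscaled affinization $\tilde A=\bigl(\begin{smallmatrix}1&0\\ v&A\end{smallmatrix}\bigr)$, $\tilde B=\bigl(\begin{smallmatrix}1&0\\ 0&B\end{smallmatrix}\bigr)$, accept that $(\tilde B_1,\tilde A_1)=(1,1)$ violates the hypotheses of Theorem \ref{main}, and propose to rerun its induction with the first coordinate pinned at $1$: the target becomes density in the hyperplane $\{y_1=1\}$, the base step reduces to reaching $(1,0,\ldots,0)$ (which still requires a word, e.g.\ $\tilde B^{k}\tilde p$ with $k\to\infty$, so it is cheap but not literally vacuous), and each later step is driven by the genuinely generating pair $(B_s,A_s)$ --- precisely because with $\tilde A_1=\tilde B_1=1$ the ratios in \eqref{secondineq} become the pairs in \eqref{propiii}, and $(\tilde A-I+\Delta)^{-1}$ has first column $\bigl(1,-(A-I)^{-1}v\bigr)^{T}$, matching your hypothesis. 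This does go through, but it obliges you to re-verify the entire inductive machinery (the pinned form of the inductive hypothesis, the analogue of \eqref{aubu}, and the analogue of Lemma \ref{three} giving $\omega\neq 0$) in the degenerate case, rather than quoting Theorem \ref{main}. The paper avoids the degeneracy by a scaling trick: it chooses $(b,a)$ with $(B_1,A_1)\prec(b,a)$ and sets $A'=\bigl(\begin{smallmatrix}a&0\\ av&aA\end{smallmatrix}\bigr)$, $B'=\bigl(\begin{smallmatrix}b&0\\ 0&bB\end{smallmatrix}\bigr)$, whose diagonals satisfy \eqref{firstineq}--\eqref{secondineq} exactly (the ratios to the first entry are again the $(B_k,A_k)$), so Theorem \ref{main} applies as a black box and yields a dense orbit in $\mathbb{K}^{n+1}$; the affine orbit is then recovered through the projectivization $\Phi(y)=(y_2/y_1,\ldots,y_{n+1}/y_1)^{T}$ together with the observation that $L\mapsto\Phi\circ L\circ\Psi$ is a semigroup homomorphism from $\langle A',B'\rangle$ to the affine semigroup. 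What your route buys is a self-contained argument inside the invariant hyperplane with no projective bookkeeping; what the paper's route buys is that no step of the proof of Theorem \ref{main} needs to be redone, at the price of the $\Phi$/$\Psi$ homomorphism argument and the auxiliary parameters $a,b$. If you carry out your plan, spell out the pinned induction explicitly (in particular why the new coordinate is tuned proportionally to the pinned value $1$ and why the starting points produced by $\tilde S^{-l}\tilde U^{-k}$ still have first coordinate $1$); as written these points are asserted rather than proved.
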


\subsection*{Acknowledgement}

I would like to Thank George Costakis for reading this manuscript and his useful comments. I also would like to thank the referees who read an earlier version of this article and suggested many corrections.

\section{Proofs}

To prove Theorem \ref{main}, we need the following three lemmas.

\begin{lem}\label{hk} Let $a,b,c,d \in \mathbb{K}$ with 
\begin{equation}\label{lnbounds}
|a|,|c|<1~;~|b|,|d|>1~;~\frac{\ln|a|}{\ln|b|} < \frac{\ln|c|}{\ln|d|}~.
\end{equation}
Suppose $(m_i,n_i) \rightarrow (\infty, \infty)$. If the set $\{|c^{m_i}d^{n_i}|:i \geq 1\}$ is bounded from above, then $\lim_{i \rightarrow \infty} a^{m_i}b^{n_i}=0$. 

\end{lem}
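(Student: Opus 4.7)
The plan is to pass to logarithms and reduce everything to a linear inequality in the exponent pair $(m_i,n_i)$. Set $\alpha=-\ln|a|/\ln|b|$ and $\gamma=-\ln|c|/\ln|d|$; both quantities are strictly positive since $|a|,|c|<1$ and $|b|,|d|>1$. Because the ratios $\ln|a|/\ln|b|$ and $\ln|c|/\ln|d|$ are themselves negative, the hypothesis $\ln|a|/\ln|b|<\ln|c|/\ln|d|$ translates (after multiplication by $-1$) into the cleaner form
\[
\alpha>\gamma>0.
\]

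The next step is to cash in the boundedness assumption. Writing
\[
|c^{m_i}d^{n_i}|=|d|^{-\gamma m_i+n_i},
\]
the hypothesis that $\{|c^{m_i}d^{n_i}|\}$ is bounded above, combined with $|d|>1$, forces a uniform upper bound
\[
n_i\leq \gamma m_i+C
\]
for some constant $C$ independent of $i$. This is the key structural consequence: the sequence $(m_i,n_i)$ must lie, for all large $i$, below a line of slope $\gamma$.

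Finally, I would apply the same rewriting to the target expression:
\[
|a^{m_i}b^{n_i}|=|b|^{-\alpha m_i+n_i}\leq |b|^{-(\alpha-\gamma)m_i+C}.
\]
Since $\alpha-\gamma>0$, $|b|>1$, and $m_i\to\infty$ (which follows from $(m_i,n_i)\to(\infty,\infty)$), the right-hand side tends to $0$, whence $a^{m_i}b^{n_i}\to 0$, as claimed.

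There is no real obstacle in this argument; it is a purely elementary comparison of linear functions of $(m_i,n_i)$. The only point requiring mild vigilance is the sign bookkeeping: one must remember that both ratios $\ln|a|/\ln|b|$ and $\ln|c|/\ln|d|$ are negative, so the stated inequality between them corresponds to $\alpha>\gamma$ after negation. Once that bookkeeping is done, the bound on $n_i-\gamma m_i$ automatically dominates $n_i-\alpha m_i$, and the exponential decay follows.
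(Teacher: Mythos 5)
Your proof is correct and is essentially the paper's own argument: take logarithms, use the boundedness of $|c^{m_i}d^{n_i}|$ to get the linear bound $n_i\leq \gamma m_i+C$, and then observe that the exponent of $|a^{m_i}b^{n_i}|$ is dominated by $-(\alpha-\gamma)m_i+C\to-\infty$. The only difference is cosmetic notation ($\alpha,\gamma$ for the negated log-ratios), and your sign bookkeeping is handled correctly.
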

\begin{proof}
Choose $M>0$ so that $|c^{m_i}d^{n_i}|<M$ for some sequence $(m_i,n_i) \rightarrow (\infty, \infty)$. It follows that
\begin{equation}\nonumber
m_i \ln |c| + n_i \ln |d| < \ln M~\Rightarrow~ n_i< - {{\ln |c|} \over {\ln |d|}}m_i +{{\ln M} \over {\ln |d|}}~.
\end{equation}
And so
\begin{eqnarray} \nonumber
m_i \ln |a| + n_i \ln |b| &=& \ln |b| \left (  {{\ln |a|} \over {\ln |b|}} m_i+n_i \right ) \\ \label{lnbounds2}
& \leq & m_i\ln |b|  \left ( {{\ln |a|} \over {\ln |b|}}-{{\ln |c|} \over {\ln |d|}} \right )+{{(\ln M)( \ln |b|)} \over {\ln |d|}}~.
\end{eqnarray}
It follows from \eqref{lnbounds} and \eqref{lnbounds2} that $m_i \ln |a|+n_i \ln |b| \rightarrow -\infty$, and equivalently $a^{m_i}b^{n_i} \rightarrow 0$. 
\end{proof}

\begin{lem} \label{bounds}
Suppose that $A$ is a lower triangular $n\times n$ matrix and its diagonal entries satisfy $0<|A_1|<\ldots<|A_n|$. Then there exists $\lambda>0$ (that depends only on $A$) so that 
\begin{equation}\label{Alambdaineq}
|(A^l)_{ij}| \leq \lambda |A_i|^l~;~|(A^{-l})_{ij}| \leq \lambda |A_j|^{-l}~,~\forall i,j=1,\ldots, n~,~\forall l\geq 1~.
\end{equation}

\end{lem}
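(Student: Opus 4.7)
The plan is to diagonalize $A$ and exploit the lower-triangular structure of a suitably chosen eigenbasis. Since $A$ is lower triangular, its eigenvalues are exactly its diagonal entries $A_1,\ldots,A_n$, and the hypothesis $|A_1|<\ldots<|A_n|$ forces them to be distinct complex numbers. Hence $A$ is diagonalizable, $A=PDP^{-1}$ with $D=\mathrm{diag}(A_1,\ldots,A_n)$.

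The key structural step is to show that $P$ can be chosen so that both $P$ and $P^{-1}$ are lower triangular. The flag $V_k:=\mathrm{span}(e_k,\ldots,e_n)$ is $A$-invariant (because $A$ is lower triangular), and $A$ acts on the one-dimensional quotient $V_k/V_{k+1}$ as multiplication by $A_k$. Since the eigenvalues $A_k,A_{k+1},\ldots,A_n$ of $A|_{V_k}$ are all distinct, there exists an eigenvector $v_k\in V_k$ of $A$ for $A_k$; moreover $v_k\notin V_{k+1}$, since $A_k$ is not an eigenvalue of $A|_{V_{k+1}}$. Thus $v_k$ has its first nonzero entry in position $k$, the matrix $P=[v_1|\cdots|v_n]$ is lower triangular, and its inverse is too.

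Writing $(A^l)_{ij}=\sum_{k}P_{ik}A_k^l(P^{-1})_{kj}$, for $i<j$ every summand vanishes and $(A^l)_{ij}=0$, so the bound is trivial. For $i\geq j$, the triangularity of $P$ and $P^{-1}$ forces $j\leq k\leq i$, on which range $|A_k|\leq|A_i|$ by hypothesis. Therefore
$$|(A^l)_{ij}|\;\leq\;|A_i|^l\sum_{k=j}^{i}|P_{ik}|\,|(P^{-1})_{kj}|,$$
and taking $\lambda$ to be the maximum over $i,j$ of these finite sums (a constant depending only on $A$) gives the first estimate. The inverse bound is identical: from $A^{-l}=PD^{-l}P^{-1}$, for $j\leq k\leq i$ we now use $|A_k|^{-1}\leq|A_j|^{-1}$ to obtain $|(A^{-l})_{ij}|\leq|A_j|^{-l}\sum_{k=j}^i|P_{ik}|\,|(P^{-1})_{kj}|$, so the same $\lambda$ works.

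The only step that requires real thought is the triangularity of $P$, which comes from the $A$-invariance of the flag $V_1\supset\cdots\supset V_n$ together with distinctness of eigenvalues on successive quotients. Once that is in place, the lemma reduces to bounding each term of a finite sum by its largest factor, and everything else is bookkeeping.
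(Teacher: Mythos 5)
Your proof is correct, but it takes a genuinely different route from the paper. The paper argues by induction on $n$: writing $A=\begin{pmatrix} A_1 & 0\\ C & D\end{pmatrix}$, it applies the inductive hypothesis to the $(n-1)\times(n-1)$ block $D$, expresses the first column of $A^l$ through $C^l=\sum_{k=0}^{l-1}A_1^kD^{l-1-k}C$, and bounds it by a geometric series, producing $\lambda$ recursively with a factor $b\lambda_D/(|A_2|-|A_1|)$. You instead observe that the strict inequalities $|A_1|<\cdots<|A_n|$ force the eigenvalues to be distinct, diagonalize $A=PDP^{-1}$, and prove the key structural fact that $P$ (hence $P^{-1}$) can be taken lower triangular via the $A$-invariant flag $V_k=\mathrm{span}(e_k,\ldots,e_n)$; your flag argument (eigenvector for $A_k$ lies in $V_k$ but not $V_{k+1}$) is sound, and the termwise estimate on $\sum_{k=j}^{i}P_{ik}A_k^{\pm l}(P^{-1})_{kj}$ correctly uses $|A_k|\leq|A_i|$ (resp.\ $|A_k|^{-1}\leq|A_j|^{-1}$) on the range $j\leq k\leq i$. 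What each approach buys: yours gives an explicit constant $\lambda=\max_{i,j}\sum_k|P_{ik}|\,|(P^{-1})_{kj}|$ and treats $A^l$ and $A^{-l}$ symmetrically in a single computation, which is arguably cleaner; the paper's induction is more elementary in that it never invokes eigenvectors or diagonalizability and manipulates only the matrix entries, at the cost of handling the two inequalities separately and building $\lambda$ through the recursion. Under the lemma's hypotheses both arguments are complete, so this is a legitimate alternative proof rather than a variant of the paper's.
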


\begin{proof}
Proof is by induction on $n$. For $n=1$, the statements are true for $\lambda=1$. Suppose that inequalities \eqref{Alambdaineq} hold for any $(n-1)\times (n-1)$ lower triangular matrix satisfying the conditions of the lemma, and let $A$ be the following $n \times n$ matrix
\begin{equation}A= \begin{pmatrix}
   A_1   &  0  \\
     C &  D
\end{pmatrix}~,\end{equation}
where $D$ is an $(n-1)\times(n-1)$ matrix. By applying the inductive hypothesis to $D$, we conclude that there exists $\lambda_D>0$ so that for $i,j=2,\ldots, n$,
$$|(A^l)_{ij}| =|(D^l)_{ij}|\leq \lambda_D |A_i|^l~;~|(A^{-l})_{ij}|=|(D^{-l})_{ij}| \leq \lambda_D |A_j|^{-l}~,$$
which imply the inequalities \eqref{Alambdaineq} for $i,j>1$. Since \eqref{Alambdaineq} obviously holds when $i=1$ (for any $\lambda \geq 1$), it is left to prove \eqref{Alambdaineq} for $i>1$ and $j=1$. 
One has

\begin{equation} \nonumber
A^l= \begin{pmatrix}
   A_1^l   &  0  \\
     C^l &  D^l
\end{pmatrix}~;~ C^l=\sum_{k=0}^{l-1}A_1^k D^{l-1-k}C~.\end{equation}
It follows that for $i>1$,
$$(A^l)_{i1}=\sum_{k=0}^{l-1} \sum_{t=2}^{n}A_1^k(A^{l-1-k})_{it}A_{t1}$$
Let $b=\sum_{k=2}^{n} |A_{k1}|$. Then for $i>1$, we have
\begin{eqnarray}\nonumber
|(A^l)_{i1}| &=& \left | \sum_{k=0}^{l-1}\sum_{t=2}^{n}A_1^k \left ( A^{l-1-k} \right)_{it}A_{t1}  \right |\\ \nonumber
& \leq &  \sum_{k=0}^{l-1} b |A_1|^k \lambda_D|A_i|^{l-1-k} \\ \nonumber
&\leq& {{b \lambda_D} \over {|A_i|-|A_1|}}|A_i|^l~.\end{eqnarray}
And so for $\lambda$ defined by
\begin{equation}\nonumber
\lambda = \max \left (1, \lambda_D, {{b \lambda_D} \over {|A_2|-|A_1|}} \right )~,
\end{equation}
the entries on the $i$'th row of $A^l$ are all bounded from above by $\lambda |A_i|^l$ in absolute value. The other inequality in \eqref{Alambdaineq} follows similarly. \end{proof}

Recall that $\Delta$ is the $n \times n$ matrix with $\Delta_{11}=1$ and $\Delta_{ij}=0$ for $(i,j) \neq (1,1)$. Also $I$ denotes the $n\times n$ identity matrix.
\begin{lem}\label{three}
Suppose that $A$ is a lower triangular matrix and its diagonal entries satisfy $0<|A_1|<\ldots<|A_n|$. Suppose that all of the entries on the first column of the matrix $(A_1^{-1}A-I+\Delta)^{-1}$ are non-zero. Then as $l \rightarrow \infty$ the matrix $(A_1A^{-1})^l$ converges to a matrix that all of its entries on the first column are non-zero, while all of its other entries are zero.
\end{lem}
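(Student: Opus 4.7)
The plan is to reduce the claim to a Neumann-series computation by writing $M := A_1 A^{-1}$ in block form. Since $A$ is lower triangular with diagonal $(A_1,\ldots,A_n)$, the matrix $M$ is lower triangular with diagonal $(1, A_1/A_2, \ldots, A_1/A_n)$, so I would write
$$M = \begin{pmatrix} 1 & 0 \\ u & D \end{pmatrix},$$
where $u \in \mathbb{K}^{n-1}$ collects the entries of the first column of $M$ below $M_{11}$ and $D$ is an $(n-1)\times(n-1)$ lower triangular matrix with diagonal entries $A_1/A_2,\ldots,A_1/A_n$. By hypothesis each of these has absolute value strictly less than $1$, so the spectral radius $\rho(D)<1$.

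A short induction then gives
$$M^l = \begin{pmatrix} 1 & 0 \\ \left(\sum_{k=0}^{l-1} D^k\right) u & D^l \end{pmatrix}.$$
Because $\rho(D)<1$, the Neumann series converges to $(I-D)^{-1}$ and $D^l\to 0$ (Lemma \ref{bounds} applied to $D$ supplies the needed entrywise decay directly). Therefore
$$\lim_{l\to\infty} M^l = \begin{pmatrix} 1 & 0 \\ (I-D)^{-1}u & 0 \end{pmatrix},$$
which already has the required shape: only the first column can be non-zero.

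To finish I still need to check that every entry of $(I-D)^{-1}u$ is non-zero, and this is exactly where hypothesis (ii) enters. Since $A_1^{-1}A = M^{-1}$, the hypothesis concerns $(M^{-1} - I + \Delta)^{-1}$. Writing
$$M^{-1} - I + \Delta = \begin{pmatrix} 1 & 0 \\ -D^{-1}u & D^{-1}-I \end{pmatrix}$$
(the role of $\Delta$ is precisely to supply the invertible $(1,1)$ corner that $M^{-1}-I$ is missing), a direct block inversion together with the identity $(D^{-1}-I)^{-1}D^{-1} = (I-D)^{-1}$ gives
$$(M^{-1} - I + \Delta)^{-1} = \begin{pmatrix} 1 & 0 \\ (I-D)^{-1}u & (D^{-1}-I)^{-1} \end{pmatrix}.$$
The first column of this inverse is $(1,(I-D)^{-1}u)^T$, which coincides with the first column of $\lim_l M^l$. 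Hypothesis (ii) then immediately delivers the desired non-vanishing.

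The only non-routine step is recognizing the algebraic identification of the Neumann-series limit $(I-D)^{-1}u$ with the first column of $(A_1^{-1}A - I + \Delta)^{-1}$; once this is spotted, everything else is block algebra plus standard convergence of geometric series applied to the block $D$.
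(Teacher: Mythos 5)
Your proof is correct and follows essentially the same route as the paper: the same block decomposition of $A_1A^{-1}$, the same Neumann-series limit $(I-D)^{-1}u$ for the first column, and the same block inversion identifying that limit with the first column of $(A_1^{-1}A-I+\Delta)^{-1}$. One small caveat: Lemma \ref{bounds} does not apply to $D$ itself (its diagonal entries decrease in modulus), so the entrywise decay should come either from $\rho(D)<1$, as your main argument already provides, or from the negative-power bound of Lemma \ref{bounds} applied to $D^{-1}$ (equivalently to $A_1^{-1}A$, which is what the paper does).
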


\begin{proof}
Let us set
\begin{equation} \label{alform}
A_1A^{-1}=\begin{pmatrix}
   1   &   0 \\
    H  &  F
\end{pmatrix}~;~
(A_1A^{-1})^l=  \begin{pmatrix}
   1  &  0  \\
     H^l &  F^l
\end{pmatrix}~,\end{equation}
where $F$ is an $n\times n$ matrix and $F^l$ is the $l$'th matrix power of $F$, while $H$ is a column vector and $H^l$ satisfies the recursive relation
\begin{equation}\label{fomr1}
H^{l}=(I+F+\ldots+F^{l-1})H~.\end{equation}
It follows from Lemma \ref{bounds} (applied to $A_1^{-1}A$) that, for $i,j=1,\ldots, n-1$,
$$|(F^l)_{ij}| \leq |A_1^{-1}A_{j+1}|^{-l}~.$$
It follows that $I+F+F^2+\ldots$ converges absolutely to $(I-F)^{-1}$. Therefore, by \eqref{fomr1}, we have $H^l \rightarrow (I-F)^{-1}H$ as $l \rightarrow \infty$. On the other hand,
\begin{equation}\nonumber
(A_1^{-1}A-I+\Delta)^{-1}= \begin{pmatrix}
   1  &  0  \\
     -(F-I)^{-1}H &  (F-I)^{-1}
\end{pmatrix}~.
\end{equation}
Since the entries on the first column of $(A_1^{-1}A-I+\Delta)^{-1}$ are all assumed to be non-zero, it follows that all of the entries of the first column of $\lim_{l \rightarrow \infty} (A_1A^{-1})^l$ are non-zero. The last statement in the lemma follows from the convergence $F^l \rightarrow 0$ and \eqref{alform}. \end{proof}

In the sequel, the $i$'th component of a column vector $x$ is denoted by $x_i$. Also $\mathrm{cl}(Y)$ denotes the closure of the set $Y$. Now, we are ready to prove Theorem \ref{main}. 
\\
\\
\emph{Proof of Theorem \ref{main}}. Let $\Omega$ denote the closure of the orbit of the given vector $p=(p_1,\ldots, p_n)^T  \in \mathbb{K}^n$, $p_1\neq 0$. We prove by induction on $s\geq 1$ that
\begin{equation}\label{indhypall}
\mathbb{K}^s \times \{0\}^{n-s} \subseteq \mathrm{cl} \left \{ B^{\delta_s}A^{\gamma_s}\ldots B^{\delta_1}A^{\gamma_1}p|\forall i~ \gamma_i,\delta_i \in \mathbb{N}  \right \} \subseteq \Omega~.
\end{equation} 
To prove \eqref{indhypall} for $s=1$, let $x_1 \in \mathbb{K}$ be arbitrary. Since $(B_1,A_1)$ is a generating pair, there exists a sequence $(k_i,l_i)_{i=1}^\infty \rightarrow (\infty,\infty)$ so that $B_1^{k_i}A_1^{l_i}p_1 \rightarrow x_1$, that is $(B^{k_i}A^{l_i}p)_1 \rightarrow x_1$ as $i \rightarrow \infty$. By Lemma \ref{bounds} there exists $\lambda>0$ (that depends only on $A$) so that
\begin{equation}\label{ineqhl}
|(B^{k_i}A^{l_i}p)_j| \leq \lambda |B_j|^{k_i}|A_j|^{l_i}\sum_{t=1}^n |p_t|~.
\end{equation}
It follows from inequalities \eqref{secondineq} that 
$$\frac{\ln |B_j|}{\ln |A_j|} < \frac{\ln |B_1|}{\ln |A_1|}~,~\forall j>1~,$$
and so by Lemma \ref{hk} and inequality \eqref{ineqhl}, we conclude that $(B^{k_i}A^{l_i}p)_j \rightarrow 0$ for $j\geq 2$ as $i \rightarrow \infty$. It follows that $(x_1,0, \ldots, 0)^T=\lim_{i \rightarrow \infty} B^{k_i}A^{l_i}p  \in \Omega$, and \eqref{indhypall} follows for $s=1$.

Next, suppose that \eqref{indhypall} holds for some $s<n$, and we will show that \eqref{indhypall} holds for $s+1$. Write the matrices $A$ and $B$ in the forms
\begin{equation}\label{ABformat}
A= \begin{pmatrix}
   S   &  0  \\
     W &  T
\end{pmatrix}~,~B=\begin{pmatrix}
    U  & 0   \\
     0 & V 
\end{pmatrix}~,\end{equation}
where $S$ and $U$ are $s\times s$ matrices. For $k,l \in \mathbb{N}$, let $O^{k,l}$ be the $(n-s) \times s$ matrix defined by
\begin{equation}\label{oklform}
\begin{pmatrix}
      I_{s}    \\
       O^{k,l}
\end{pmatrix}=B^kA^l \begin{pmatrix}
      I _{ s}  \\
      \bf{0}  
\end{pmatrix}S^{-l}U^{-k}~,
\end{equation}
where $\bf{0}$ is the $(n-s) \times s$ zero matrix and $I_{s}$ is the $s \times s$ identity matrix. Let $E$ denote the $(n-s)\times s$ matrix with $E_{11}=1$ and $E_{ij}=0$ for $(i,j) \neq (1,1)$. We show that, for any $\alpha \in \mathbb{K}$, there is a sequence $(k_i,l_i) \rightarrow (\infty,\infty)$ so that $O^{k_i,l_i} \rightarrow \alpha E$ as $i \rightarrow \infty$. From the definition of $O^{k,l}$, we have
\begin{eqnarray} \nonumber
(O^{k,l})_{11} &=&\left ({{B_{s+1}} / {B_1}} \right )^k \sum_{t=1}^s (A^l)_{s+1~t} \cdot (S^{-l})_{t1}\\ \nonumber
&=& -(B_{s+1}/B_1)^k (A_{s+1})^l (A^{-l})_{s+1~1}\\ \label{firstokl}
&=&-({{B_{s+1}} / {B_1}} )^k  ({{A_{s+1}} / {A_1}}  )^l \left ((A_1A^{-1})^l  \right )_{s+1~1}~.
\end{eqnarray}
Next, let
\begin{equation}\label{convaubu}
\omega=\lim_{l \rightarrow \infty} \left ((A_1A^{-1})^l  \right )_{s+1~1} \neq 0~.
\end{equation}
We have $\omega \neq 0$ by condition (ii) of Theorem \ref{main} and Lemma \ref{three}. Moreover, since $(B_{s+1}/B_1,A_{s+1}/A_1)$ is a generating pair, for any $\alpha \in \mathbb{K}$ there exists a sequence $(k_i,l_i) \rightarrow (\infty,\infty)$ so that 
\begin{equation}\label{convaub}
(B_{s+1} / B_1)^{k_i} (A_{s+1}/A_1)^{l_i} \rightarrow - \alpha / \omega~,
\end{equation}
as $i \rightarrow \infty$, which by \eqref{firstokl} implies that $(O^{k_i,l_i})_{11} \rightarrow \alpha$. We now show that all other entries in $O^{k_i,l_i}$ converge to zero. By Lemma \ref{Alambdaineq} (applied to $A$) and equation \eqref{oklform}, for $j=1,\ldots, n-s$ and $m=1,\ldots, s$, we have 
\begin{equation}\label{inokl1}
(O^{k,l})_{jm} \leq n\lambda^2 \left | {{B_{j+s}} \over {B_{m}}} \right |^{k} \left |{{A_{j+s}} \over {A_{m}}} \right |^{l}~,
\end{equation}
where $\lambda>0$ depends only on $A$. Moreover, we conclude from inequalities \eqref{secondineq} that
\begin{equation}\label{aubu}
{{\ln |B_{j+s}/B_m|} \over {\ln |A_{j+s}/A_m|}} \leq {{\ln |B_{j+s}/B_1|} \over {\ln |A_{j+s}/A_1|}}  \leq {{\ln |B_{s+1}/B_1|} \over {\ln |A_{s+1}/A_1|}}~,\end{equation}
where both inequalities are equalities simultaneously only when $(j,m) = (1,1)$. Inequalities \eqref{inokl1} and \eqref{aubu} together with the convergence in \eqref{convaub} and Lemma \ref{hk} imply that $O^{k_i,l_i} \rightarrow \alpha E$. Now, from \eqref{oklform}, we conclude that for any $(y_1,\ldots, y_{s+1}) \in \mathbb{K}^{s+1}$ with $y_{s+1}=\alpha y_1$, we have
\begin{equation}\label{indstepall}
\lim_{i \rightarrow \infty}B^{k_i}A^{l_i} (x_1,\ldots,x_s,0,\ldots,0)^T=(y_1,\ldots, y_{s+1},0, \ldots, 0)^T~,
\end{equation}
where $(x_1,\ldots, x_s)^T=S^{-l_i}U^{-k_i}(y_1,\ldots, y_s)^T$. It follows from the inductive hypothesis and \eqref{indstepall} and by varying $\alpha \in \mathbb{K}$ that
$$\mathbb{K}^{s+1} \times \{0\}^{n-s-1} \subseteq \mathrm{cl} \bigcup_{k,l \in \mathbb{N}} B^kA^l (\mathbb{K}^s \times \{0\}^{n-s}) ~,$$
which completes the proof of the inductive step. Theorem \ref{main} follows when we reach $s=n$.  \hfill $\square$

Next, we present the proof of Theorem \ref{second}.
\\
\\
\emph{Proof of Theorem \ref{second}}. Let $p=(p_1,\ldots, p_{n})^T$ be a an arbitrary column vector. Choose $a,b$ so that $(B_1,A_1) \prec (b,a)$. Define the matrices
\begin{equation}A^\prime=\begin{pmatrix}
   a & 0  \\
   av & aA
\end{pmatrix}~,~B^\prime=\begin{pmatrix}
    b & 0 \\
    0 & bB
\end{pmatrix}~.\end{equation}
We first verify that Theorem \ref{main} is applicable to the pair $(A^\prime, B^\prime)$. Condition (i) of Theorem \ref{main} obviously holds for $A^\prime$ and $B^\prime$. To check condition (ii) of Theorem \ref{main}, note that 
$$(a^{-1}A^\prime-I_{n+1}+\Delta)^{-1}=\begin{pmatrix}
      1& 0   \\
      v&  A-I_n
\end{pmatrix}^{-1}=\begin{pmatrix}
      1 &  0  \\
    - (A-I_n)^{-1}v &  (A-I_n)^{-1}
\end{pmatrix}~.
$$
Since all of the entries of the column vector $(A-I_n)^{-1}v$ are non-zero, all of the entries on the first column of $(a^{-1}A^\prime-I_{n+1}+\Delta)^{-1}$ are non-zero, and so condition (ii) of Theorem \ref{main} holds for $A^\prime$. 
 
 By Theorem \ref{main}, the orbit of the column vector $(1,p_1,\ldots, p_n)^T$ is dense in $\mathbb{K}^{n+1}$. Let $\Phi: (\mathbb{K}\backslash \{0\}) \times \mathbb{K}^{n} \rightarrow \mathbb{K}^n$ be the following map
$$\Phi(y_1,\ldots, y_{n+1})^T=(y_2/y_1,\ldots, y_{n+1}/y_1)^T~.$$
Also let $\Psi: \mathbb{K}^n \rightarrow (\mathbb{K}\backslash \{0\}) \times \mathbb{K}^{n}$ be the partial inverse 
$$\Psi(x_1,\ldots, x_n)^T=(1,x_1,\ldots, x_n)^T~.$$
Then $\Phi \circ A^\prime \circ \Psi (x)=Ax+v = Dx$ and $\Phi \circ B^\prime \circ \Psi (x)=Bx$ for all $x\in \mathbb{K}^n$. Moreover, for any linear map $L:\mathbb{K}^{n+1} \rightarrow \mathbb{K}^{n+1}$ and $y=(y_1\ldots, y_{n+1}) \in \mathbb{K}^{n+1}$ with $y_1 \neq 0$, we have 
$$\Phi \circ L \circ \Psi \circ \Phi(y)=\Phi \circ L (1,y_2/y_1,\ldots, y_{n+1}/y_1)^T= \Phi \circ L (y)~.$$ 
The map $L \rightarrow \Phi \circ L \circ \Psi$ is then a semigroup homomorphism from $\langle A^\prime, B^\prime \rangle$ to $\langle D, B \rangle$, since
$$(\Phi \circ L_1 \circ \Psi) \circ (\Phi \circ L_2 \circ \Psi)=(\Phi \circ L_1 \circ  \Psi \circ \Phi) \circ L_2 \circ \Psi=\Phi  \circ (L_1 \circ L_2) \circ \Psi~,$$
for all $L_1, L_2 \in \langle A^\prime, B^\prime \rangle$. It follows that the orbit of $p$ in $\mathbb{K}^n$ is the $\Phi$-image of the orbit of $\Psi(p)$ in $\mathbb{K}^{n+1}$. Since the orbit of $\Psi(p)$ under the action of $\langle A^\prime, B^\prime \rangle$ is dense in $\mathbb{K}^{n+1}$, the orbit of $p$ under the action of $\langle D,B \rangle$ is dense in $\mathbb{K}^n$. \hfill $\square$

\section{Conclusion and open questions}

In both real and complex cases, we have constructed $n \times n$ matrices that have dense orbits. We say an orbit is \emph{somewhere dense} if the closure of the orbit contains a non-empty open set. In \cite{F}, Feldman showed that there exist a $2n$-tuple of matrices with a somewhere dense orbit that is not dense in $\mathbb{R}^n$. Moreover, he proved that finite tuples with such property cannot exist on $\mathbb{C}^n$. In this direction, we propose the following problem.
\\
\\
\textbf{Problem 1}. \emph{Show that in any dimension $n\geq 1$ there exists a pair of real matrices with a somewhere dense but not dense orbit. Show that such a pair does not exist in the complex case. }
\\
\\
When $n=2$, one can easily show (using the same ideas proving Theorem \ref{main}) that for real matrices
$$\begin{pmatrix}
    a  &  0  \\
     b &  d
\end{pmatrix}~;~B=\begin{pmatrix}
   u   & 0   \\
   0   &  v
\end{pmatrix}~,$$
with $d>a>1>u>v>0$, $b>0$, and $(-v,d) \prec (-u,a)$, the orbit of every $P=(P_1,P_2)^T \in (0,\infty)^2$ is dense in $(0,\infty)^2$, while it is obviously not dense in $\mathbb{R}^2$. 

The next problem considers the action of $n\times n$ matrices on $n \times k$ matrices. 
\\
\\
\textbf{Problem 2.} \emph{Are there $n \times n$ matrices $A$ and $B$ and an $n \times k$ matrix $C$ over $\mathbb{K}$ so that the orbit of $C$ under the action of $\langle A,B \rangle$ is dense in the set of $n \times k$ matrices over $\mathbb{K}$?}
\\
\\
In particular, Problem 2 is asking if there are matrices $A$ and $B$ so that the semigroup generated by $A$ and $B$ is dense in the set of $n \times n$ matrices. When $k>n$, it is easy to see that such $A,B$, and $C$ do not exist \cite{J3}; moreover, for $k>2$, no pair of lower triangular matrices $(A,B)$ would work, and a more complicated construction will be required.

To state the next problem, we need the following definition. A continuous linear operator $T$ on a topological vector space $X$ is called \emph{multi-hypercyclic} if there exist vectors $x_1,\ldots, x_k \in X$ such that the union of the orbits of the $x_i$'s is dense in $X$. A. Herrero \cite{Herr} conjectured that multi-hypercyclicity implies hypercyclicity. This conjecture was verified by Costakis \cite{Cost1} and later independently by Peris \cite{Peris}. In this direction, the following problem arises.
\\
\\
\textbf{Problem 3}. Suppose that $A$ and $B$ are $n \times n$ matrices over the field $\mathbb{K}$ with the property that the union of the orbits of vectors $v_1,\ldots, v_k \in \mathbb{K}$ under the action of $\langle A,B \rangle$ is dense in $\mathbb{K}^n$. Does it follow that there is a $j \in \{1,2,\ldots, k\}$ so that the orbit of $v_j$ is dense in $\mathbb{K}^n$?

\end{document}